\newcommand{\eg}{{e.g.\/}\xspace}
\newcommand{\todo}[1]{{\bf *** #1 ***}}
\newcommand{\half}{\frac{1}{2}}
\newcommand{\secref}[1]{\mbox{\S$\,$\ref{sec:#1}}}
\newcommand{\lemref}[1]{\mbox{Lemma~\ref{lem:#1}}}
\newcommand{\remref}[1]{\mbox{Remark~\ref{rem:#1}}}
\newcommand{\bd}{{\rm bd}}
\newcommand{\transp}{^{\prime}}
\newcommand{\cd}{\,|\,}
\newcommand{\rank}{{\rm rank}}
\newcommand{\cov}{{\rm Cov}}
\newcommand{\E}{{\mbox{E}}}
\newcommand{\R}{{\cal R}}
\newtheorem{theorem}{Theorem}[section]
\newtheorem{lem}[theorem]{Lemma}
\newtheorem{rem*}{Remark}[section]
\theoremstyle{definition}
\newcommand{\iid}{{independent and identically distributed}\xspace}
\renewcommand{\todo}[1]{}
\renewcommand{\bd}{{\bm d}} \newcommand{\bm}[1]{\mbox{\boldmath$#1$}}
\newcommand{\bmsub}[1]{\mbox{\boldmath{\scriptsize$#1$}}}
\newcommand{\bnabla}{{\bm\nabla}}
\newcommand{\bX}{{\bm X}}
\newcommand{\br}{{\bm r}} \newcommand{\bY}{{\bm Y}}
\newcommand{\bT}{{\bm T}} \newcommand{\bmu}{{\bm\mu}}
\newcommand{\bTheta}{{\bm\Theta}}
\newcommand{\Ybar}{\overline
Y}
\newcommand{\norm}{{\cal N}}
\renewcommand{\transp}{^{\mbox{\rm\scriptsize T}}}
\newcommand{\rss}{R}
\newcommand{\kl}{\mbox{\rm KL}}
\newcommand{\tr}{\mathop{\mathrm{tr}}}
\newcommand{\as}[1]{\mbox{$\mathop{\mathrm{a.\,s.}}\,\,[#1\,]$}}
\newcommand{\aic}{\mbox{\rm AIC}\xspace}
\newcommand{\var}{{\mbox{var}}}
\begin{document}

\begin{frontmatter}
\title{Bayesian Model Selection Based on Proper Scoring Rules\thanksref{T1}}
\runtitle{Bayesian Model Selection Based on Proper Scoring Rules}

\begin{aug}
\author{\fnms{A. Philip} \snm{Dawid}\corref{}\thanksref{t1}\ead[label=e1]{apd@statslab.cam.ac.uk}}%,
\and
\author{\fnms{Monica} \snm{Musio}\thanksref{t2}\ead[label=e2]{mmusio@unica.it}}

\runauthor{A.~P.~Dawid and M.~Musio}

% \affiliation{}

\relateddois{T1}{Related articles:
DOI: \relateddoi[ms=BA942A]{Related item:}{10.1214/15-BA942A},
DOI: \relateddoi[ms=BA942B]{Related item:}{10.1214/15-BA942B},
DOI: \relateddoi[ms=BA942C]{Related item:}{10.1214/15-BA942C};
rejoinder at DOI: \relateddoi[ms=BA942REJ]{Related item:}{10.1214/15-BA942REJ}.}

\thankstext{t1}{University of Cambridge, {apd@statslab.cam.ac.uk}}
\thankstext{t2}{University of Cagliari, {mmusio@unica.it}}

\end{aug}

%% Abstract %%
%
\begin{abstract}
Bayesian model selection with improper priors is not well-defined
because of the dependence of the marginal likelihood on the arbitrary
scaling constants of the within-model prior densities. We show how this
problem can be evaded by replacing marginal log-likelihood by a
homogeneous proper scoring rule, which is insensitive to the scaling
constants. Suitably applied, this will typically enable consistent
selection of the true model.
\end{abstract}

%% Keywords %%
%
\begin{keyword}
\kwd{consistent model selection}
\kwd{homogeneous score}
\kwd{Hyv\"arinen score}
\kwd{prequential}
\end{keyword}

% \begin{keyword}[class=MSC]
% \kwd[Primary ]{}
% \kwd[; secondary ]{}
% \end{keyword}

\end{frontmatter}

%% Mainmatter %%

%s1 ###
\section{Introduction}
\label{sec:intro}

The desire for an ``objective Bayesian'' approach to model selection
has produced a wide variety of suggested methods, none entirely
satisfactory from a principled perspective. Here we develop an
approach based on the general theory of proper scoring rules, and show
that, suitably deployed, it can evade problems associated with
arbitrary scaling constants, and deliver consistent model selection.

%s2 ###
\section{Bayesian Model Selection}
\label{sec:modsel}

Let ${\cal M}$ be a finite or countable class of statistical models
for the same observable $\bX\in{\cal X} \subseteq\R^k$. Each
$M\in{\cal M}$ is a parametric family, with parameter $\theta_M \in
{\cal T}_M$, a $d_M$-dimensional Euclidean space; when $M$ obtains, with
parameter value $\theta_M$, then $\bX$ has distribution
$P_{\theta_M}$, with Lebesgue density $p_M( {\bm x}\cd\theta_M)$. Having
observed data $\bX= {\bm x}$, we wish to make inference about which model
$M\in{\cal M}$ (and possibly which parameter-value $\theta_M$)
actually generated these data.

A subjective Bayesian would begin by assigning a discrete prior
distribution over ${\cal M}$, with $\alpha(M)$, say, the assessed
probability that the true model is $M\in{\cal M}$; and, within each
model $M$, a prior distribution $\Pi_M$ for its parameter $\theta_M$
(to be interpreted as describing conditional uncertainty about
$\theta_M$, given the validity of model $M$). For simplicity we
suppose that $\Pi_M$ has a density function, $\pi_M(\theta_M)$, with
respect to Lebesgue measure $d\theta_M$ over ${\cal T}_M$.

The {\em predictive density function\/} of $\bX$, given only the
validity of model $M$, is
%
%e1 ###
\begin{equation}
\label{eq:preddens}
p_M({\bm x}) = \int_{{\cal T}_M} p_M({\bm x}\cd\theta_M)\, \pi
_M(\theta_M)\,d\theta_M.
\end{equation}
This can be thought of as a hybrid between an ``objective'' component,
$p_M(x \cd\theta_M)$, and a ``subjective'' component,
$\pi(\theta_M)$.

Considered as a function of $M\in{\cal M}$, for given data ${\bm x}$,
$p_M({\bm x})$ given by \eqref{eq:preddens}---or any function on ${\cal M}$
proportional to this---supplies the {\em marginal likelihood\/}
function, $L(M)$, over $M\in{\cal M}$, based on data ${\bm x}$:
%
%e2 ###
\begin{equation}
\label{eq:L}
L(M) \propto p_M({\bm x}).
\end{equation}

The posterior probability $\alpha(M \cd{\bm x})$ for model $M$ is then
given by Bayes's formula:
%
%e3 ###
\begin{equation}
\label{eq:bayes}
\alpha(M \cd{\bm x}) \propto\alpha(M) \times L(M)
\end{equation}
where the omitted multiplicative constant is adjusted to ensure
$\sum_{M\in{\cal M}}\alpha(M \cd{\bm x}) = 1$. In particular, the
{\em
odds\/}, $\alpha(M_1)/\alpha(M_2)$, in favour of one model $M_1$
versus another model $M_2$, are multiplied, on observing $\bX={\bm
x}$, by
the {\em Bayes factor} $L(M_1)/L(M_2)$.

However, although the Bayes factor is ``objective'' to the extent that
it does not involve the initial discrete prior distribution $\alpha$
over the model space ${\cal M}$, it does still depend on the prior
densities $\pi_{M_1}$, $\pi_{M_2}$, within the models being
compared. As shown in \citet{Dawid:2011}, if the data are
independently generated from a distribution $Q$, the log-Bayes factor,
$\log L({M_1})/L(M_2)$, behaves asymptotically as $n\{K(Q, M_2) - K(Q,
{M_1})\} +O_p(n^\half)$ when $K(Q, M_2) > K(Q, {M_1})$, where $K(Q,M)$
denotes the minimum Kullback--Leibler divergence between $Q$ and a
distribution in $M$; while, if $Q$ lies both in ${M_1}$ and in $M_2$
(so that $K(Q, M_2) = K(Q, {M_1}) = 0$), with $q(x) \equiv p( x \mid
{M_1}, \theta_1^*) \equiv p(x \mid M_2, {\theta}_2^*)$ say, we have
log-Bayes factor
%
%e4 ###
\begin{equation}
\label{eq:true}
\log\frac{L({M_1})}{L(M_2)} = \half(d_{M_2} - d_{M_1}) \log
\frac{n}{2\pi e} + \log
\frac{\rho(\theta_1^*\mid {M_1})}{\rho({\theta_2}^* \mid M_2)} +V,
\end{equation}
where $\rho(\theta\mid{M}) = {\pi_{M}(\theta)}/\{\det
I_M(\theta)\}^{\half}$ is the ``invariantised'' prior density with
respect to the Jeffreys measure on $M$; $V = O_p(1)$, with asymptotic
expectation $0$; and the dependence of $V$ on the prior specification
is $O_p(n^{-\half})$.

We thus see that, at any rate for comparing models of different
dimension, the dependence of the Bayes factor on the within-model
prior specifications is typically negligible compared with the leading
term in the asymptotic expansion. Nevertheless, many Bayesians have
agonised greatly about that dependence, and have attempted to
determine an ``objective'' version of the Bayes factor. The most
obvious approach, of using improper within-model priors, is plagued
with difficulties: the term $\rho(\theta^*\mid M)$ is perfectly
well-defined when we have a fully specified prior density, integrating
to 1; but when the prior density is non-integrable this function is
specified only up to an arbitrary scale factor---and \eqref{eq:true} will
depend on the chosen value of this factor. A~variety of {\em ad
hoc\/} methods have been suggested to evade this problem (see, for
example, \citet{OH,Ber-Per}). These methods are necessarily somewhat
subtle---one might even say contorted---and often do not even respect
the leading term asymptotics of \eqref{eq:true}.

In \citet{Dawid:2011}, it was argued that the problem of model
selection with improper priors can largely be overcome by focusing
directly on the posterior odds, rather than the Bayes factor, between
models. An alternative approach, that we develop here, is to replace
the Bayes factor by something different (but related), that is
insensitive to the scaling of the prior. For preliminary accounts of
this idea, see \citet{Musio-Dawid:2013,apd/mm:metron}.

%s3 ###
\section{Proper Scoring Rules}
\label{sec:PSR}

The log-Bayes factor for comparing models ${M_1}$ and $M_2$ is
%
%e5 ###
\begin{equation}
\label{eq:logBF}
\log p_{M_1}({\bm x}) - \log p_{M_2}({\bm x}).
\end{equation}

One way of interpreting \eqref{eq:logBF} is as a comparison of the {\em
log-scores\/} \citep{Good:1952} of the two predictive density
functions, $p_{M_1}(\cdot)$ and $p_{M_2}(\cdot)$, for $\bX$, in the
light of the observed data ${\bm x}$. That is, defining $S_L({\bm x},
Q) =
-\log q({\bm x})$, for any proposed distribution $Q$ with density function
$q(\cdot)$ over ${\cal X}$, and ${\bm x}\in{\cal X}$, we can interpret
the {\em log-score\/} $S_L({\bm x}, Q)$ as a measure of how badly $Q$ did
at forecasting the outcome ${\bm x}$; then the log-Bayes factor measures
by how much the log-score for ${M_1}$ (using the associated predictive
density) was better (smaller) than that for $M_2$.

Now the above definition of the log-score, $S_L({\bm x},Q)$, is just one
of many functions $S({\bm x},Q)$ having the property of being a {\em
proper scoring rule\/} (see, \eg\ \citet{Dawid:1986}): this is the
case if, defining $S(P,Q)$ as the expected score, $\E_{\bmsub{X} \sim
P}S(\bX,Q)$, when $\bX$ has distribution $P$, $S(P,Q)$ is minimised,
for any given $P$, by the ``honest'' choice $Q = P$. Associated with
any proper scoring rule is a {\em generalised entropy function\/}:
\begin{displaymath}
H(P) := S(P,P),
\end{displaymath}
and a non-negative {\em discrepancy function\/}:
\begin{displaymath}
D(P,Q) := S(P,Q) - H(P).
\end{displaymath}
These reduce to the familiar Shannon entropy and Kullback--Leibler
discrepancy when $S$ is the log-score.

\todo{New bit starts}

Standard statistical theory is largely based on the log-score
(corresponding to log-likelihood), the Shannon entropy, and the
Kullback--Leibler discrepancy. However, a very large part of that
theory generalises straightforwardly when these are replaced by some
other proper scoring rule, and its associated entropy and discrepancy:
see \citet{apd/mm/lv} for applications of proper scoring rules to
general estimation theory. Use of a proper scoring rule other than
the log-score typically sacrifices some efficiency for gains in
computational efficiency and/or robustness. Because there is a wide
variety of proper scoring rules, this offers greatly increased
flexibility. The choice of which specific rule to use may be based on
external considerations---for example, derived from the loss function
of a real decision problem \citep{pdg/apd:ams04}; or chosen for
convenience---for example, for reasons of tractability or robustness
\citep{apd/mm:metron}.

\todo{New bit ends}

In this paper we explore the implications and ramifications, for
Bayesian model selection, of replacing the log-score by some other
proper scoring rule as a yardstick for measuring and comparing the
quality of statistical models. In particular, we shall see that, for a
certain class of such proper scoring rules, the problems with improper
priors simply do not arise.

%s4 ###
\section{Prequential Application}
\label{sec:preq}

Let $\bX= (X_1, X_2, \ldots)$, $\bX^n = (X_1, X_2, \ldots,X_n)$. Let
$Q$ be a distribution for $\bX$, with induced joint distribution
$Q^n$, having density $q^n(\cdot)$, for $\bX^n$. Using a prequential
(sequential predictive) approach \citep{dawi:1984}, decompose $q^n$
into its sequence of recursive conditionals:
%
%e6 ###
\begin{equation}
\label{eq:q} q^n({\bm x}^n) = q_1(x_1) \times q_2(x_2)\times
\cdots\times q_n(x_n)
\end{equation}
where $q_i(\cdot)$ is the density function of the distribution $Q_i$ of
$X_i$, given $\bX^{i-1} = {\bm x}^{i-1}$; note that this depends on
${\bm x}^{i-1}$, even though the notation omits this. We now apply a
proper scoring rule $S_i$ (the form of which could in principle even
depend on ${\bm x}^{i-1}$) to the $i$th term in \eqref{eq:q}, and cumulate
the scores to obtain the {\em prequential score\/}
\[
S^n({\bm x}^n,Q) := \sum_{i=1}^n S_i(x_i, Q_i),
\]
where $Q_i$ is a function of ${\bm x}^{i-1}$. It is readily seen that
this yields a proper scoring rule for $\bX^n$ (strictly proper if
every $S_i$ is).

Define
%
%e7 ###
\begin{equation}
\label{eq:delta}
\Delta^n({\bm x}^n;P,Q) := S^n({\bm x}^n,Q) - S^n({\bm x}^n,P),
\end{equation}
and
%
%e8 ###
\begin{equation}
\label{eq:dn}
D^n({\bm x}^n;P,Q) := \sum_{i=1}^{n} D_i(P_i,Q_i),
\end{equation}
where $D_i$ is the discrepancy function associated with the component
scoring rule $S_i$. Then $D^n$ is in fact a function of ${\bm x}^{n-1}$.

Now $D^n\geq0$ is non-decreasing, and under suitable conditions we
will have $D^n \rightarrow\infty$ \as{P}. One useful condition for
this is the following:
\begin{lem}
\label{lem:kabanov}
Suppose that $P$ and $Q$ are mutually singular (as distributions for
the infinite sequence $\bX$), and for all $i$ and some $k>0$,
$D_i(P_i,Q_i) \geq kH^2(P_i,Q_i)$, where $H$ denotes Hellinger
distance. Then $D^n \rightarrow\infty$ \as{P}.
\end{lem}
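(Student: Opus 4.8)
The plan is to peel off the analytic part of the bound and reduce everything to a known dichotomy for sequences of measures, indexed by what is essentially the Hellinger process of the pair $(P,Q)$. First I would exploit monotonicity: since each summand $D_i(P_i,Q_i)\ge 0$, the sequence $D^n=\sum_{i=1}^n D_i(P_i,Q_i)$ is non-decreasing and hence converges pointwise to a limit in $[0,\infty]$, and the claim is that this limit is $+\infty$ for $P$-almost every realization of $\bX$. Applying the hypothesis $D_i(P_i,Q_i)\ge k\,H^2(P_i,Q_i)$ termwise gives
\[
D^n \;\ge\; k\sum_{i=1}^{n} H^2(P_i,Q_i),
\]
so it suffices to prove $\sum_{i=1}^{\infty} H^2(P_i,Q_i)=\infty$ \as{P}. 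Writing $\gamma_i:=\int \sqrt{p_i\,q_i}\,dx_i$ for the conditional Hellinger affinity of $P_i$ and $Q_i$, where $p_i,q_i$ are the recursive conditional densities of $X_i$ given $\bX^{i-1}$, one has $H^2(P_i,Q_i)=1-\gamma_i$ (up to the usual normalising factor), and each $\gamma_i$ is $\sigma(\bX^{i-1})$-measurable, i.e.\ predictable. Since $\gamma_i\in[0,1]$, the elementary equivalence $\sum_i(1-\gamma_i)=\infty \iff \prod_i \gamma_i=0$ lets me restate the goal as $\prod_{i=1}^{\infty}\gamma_i = 0$ \as{P}.

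Next I would bring in the likelihood ratio. Let $Z_n:=q^n(\bX^n)/p^n(\bX^n)=\prod_{i=1}^n (q_i/p_i)(X_i)$ be the ratio of the joint densities of $Q$ and $P$ on $\mathcal{F}_n:=\sigma(\bX^n)$; it is the usual non-negative $P$-martingale, so $Z_n\to Z_\infty$ \as{P}. The point of the affinity is that the \emph{square-root} process satisfies
\[
\E_P\!\left[\sqrt{Z_n}\,\big|\,\mathcal{F}_{n-1}\right] \;=\; \gamma_n\,\sqrt{Z_{n-1}},
\]
so $\sqrt{Z_n}$ is a non-negative $P$-supermartingale with the multiplicative Doob decomposition $\sqrt{Z_n}=M_n\prod_{i\le n}\gamma_i$, where $M_n$ is a non-negative $P$-martingale and $\prod_{i\le n}\gamma_i$ is predictable and decreasing. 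Mutual singularity enters through the identification of $Z_\infty$ with the Radon--Nikodym derivative of the absolutely continuous part of $Q$ with respect to $P$ on $\mathcal{F}_\infty$: thus $P\perp Q$ is equivalent to $Z_\infty=0$ \as{P}, hence to $\sqrt{Z_n}\to 0$ \as{P}. Combined with the decomposition, this shows that on the event $\{\prod_i\gamma_i>0\}$ the martingale $M_n$ must tend to $0$.

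The main obstacle is to convert ``$M_n\to 0$ on $\{\prod_i\gamma_i>0\}$'' into the conclusion $P(\prod_i\gamma_i>0)=0$; this is exactly the non-trivial half of the Kakutani--Kabanov--Liptser--Shiryaev dichotomy. In the independent case it is easy: there $\gamma_i$ is deterministic, $\E_P\sqrt{Z_n}=\prod_{i\le n}\gamma_i$ exactly, and since $\E_P[(\sqrt{Z_n})^2]=\E_P Z_n=1$ the supermartingale $\sqrt{Z_n}$ is $L^2(P)$-bounded, hence uniformly integrable, so $\E_P\sqrt{Z_\infty}=\prod_i\gamma_i$; then $\prod_i\gamma_i>0$ would force $Z_\infty>0$ with positive probability and contradict singularity. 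In the general, prequentially-dependent case this shortcut fails---$\E_P\sqrt{Z_n}\ne\prod_i\gamma_i$, and $M_n$ genuinely can vanish on part of $\{\prod_i\gamma_i>0\}$---so I would instead quote the Kabanov--Liptser--Shiryaev characterisation of absolute continuity and singularity in terms of the Hellinger process $\sum_i(1-\gamma_i)$, whose divergence is equivalent to $P\perp Q$ up to a $(P+Q)$-null set. Since any statement holding $(P+Q)$-almost surely holds in particular $P$-almost surely, this yields $\prod_i\gamma_i=0$ \as{P} and completes the proof. The only genuinely hard ingredient is therefore this dichotomy; everything else is the termwise bound, monotonicity, and the affinity computation.
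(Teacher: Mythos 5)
Your proposal is correct and takes essentially the same route as the paper: the paper's proof is a one-line reduction, via the termwise bound $D_i(P_i,Q_i) \geq k\,H^2(P_i,Q_i)$, to the Kabanov--Liptser--Shiryaev result that mutual singularity implies $\sum_{i} H^2(P_i,Q_i) \rightarrow \infty$ almost surely under $P$, which is exactly the dichotomy you end up quoting for the hard step. The likelihood-ratio and supermartingale scaffolding you develop is sound motivation for why that theorem is the right tool, but it does no logical work beyond the citation itself.
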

\begin{proof}
Singularity implies $\sum_{i=1}^n H^2(P_i,Q_i) \rightarrow\infty$
\as{P} \citep{Kabanov:1977}.
\end{proof}
\begin{rem*}
\label{rem:kostas}
We can replace $H^2$ in \lemref{kabanov} by any other discrepancy
measure dominating (a multiple of) $H^2$, including
Kullback--Leibler divergence,
% \todo{reference Pollard, from Wikipedia??}
and $d_\epsilon$ given by $d_\epsilon(P,Q) = \int
|1-q(x)/p(x)|^\epsilon\,p(x)\,dx$ for $1\leq\epsilon\leq2$
\citep{Skouras:1998}. This latter is the $L_1$-distance for
$\epsilon=1$ and the squared $\chi^2$-distance for $\epsilon=2$.
\end{rem*}

Also,
%
%e9 ###
\begin{equation}
\label{eq:mart}
U^n := \Delta^n(\bX^n;P,Q) - D^n(\bX^n;P,Q)
\end{equation}
is a $0$-mean martingale under $P$: indeed, it is the difference of the
two $0$-mean martingales
%
%e10 ###
\begin{equation}
\label{eq:martq}
S^n(\bX^n,Q) - S^n(P^n,Q^n)
\end{equation}
and
%
%e11 ###
\begin{equation}
\label{eq:martp}
S^n(\bX^n,P) - H^n(P^n).
\end{equation}
Under suitable and reasonable conditions on the behaviour of the
increments $S_i(x_i,Q_i) - S_i(x_i,P_i)$ of $\Delta_n(P,Q)$, $|U_n|$
will remain small in comparison with $D^n$. For example, if the
increments are all of similar size, a martingale law of the iterated
logarithm (see, \eg\citet{stout:1970}) would restrict $\sup_n|U_n|$
to have order $(n\log\log n)^\half$, while $D_n$ would be of order $n$.
It would then follow that, with $P$-probability~1, $\Delta^n
\rightarrow\infty$. In such a case, if $P$ is the true distribution
generating the data, then eventually we will have, with probability~1,
$S^n(\bX^n,P) < S^n(\bX^n,Q)$. Then choosing the model with the
lowest prequential score $S^n$ will yield a consistent criterion for
selecting among a finite collection of distributions for $\bX$.

%s4.1 ###
\subsection{Application to Model Selection}
\label{sec:app} The above theory can be applied to the case that $P$,
$Q$ are the predictive distributions associated with different
Bayesian models, $M$ and $N$. In particular, suppose we have
statistical models
%
%e12 ###
\begin{equation}
\label{eq:modp}
{\cal P} = \{P_{\theta}: \theta\in{\cal T}\}
\end{equation}
with prior $\Pi$ over ${\cal T}$; and
%
%e13 ###
\begin{equation}
\label{eq:modq}
{\cal Q} = \{Q_\phi: \phi\in{\cal F}\}
\end{equation}
with prior $K$ over ${\cal F}$; and corresponding predictive distributions
%
%e15 ###
%e14 ###
\begin{eqnarray}
\label{eq:predp}
P &=& \int_{\cal T} P_\theta\, d\Pi(\theta),\\
Q &=& \int_{\cal F} Q_\phi\,dK(\phi).
\end{eqnarray}
Under conditions that allow application of the above results, we will
have $P(A)=1$, where $A$ is the event $S^n(\bX^n,Q) -
S^n(\bX^n,P)\rightarrow\infty$. Since $P(A) = \int_{\cal T}
P_\theta(A)\,d\Pi(\theta)$, we must have $P_\theta(A) = 1$ for
$\theta\in S$, where $\Pi(S) = 1$. In particular, if $\Pi$ has
Lebesgue density $\pi$ that is everywhere positive, then $P_\theta(A)
= 1$ for almost all $\theta\in{\cal T}$. So the criterion $S^n$ will
choose the correct model with probability 1 under (almost) any
distribution in that model. This result generalises the consistency
property of log-marginal likelihood \citep{Dawid:1992} to other proper
scoring rules.

%s5 ###
\section{Local Scoring Rules}
\label{sec:local}

We call a scoring rule $S({\bm x},Q)$ {\em local (of order $m$)\/} if it
can be expressed as a function of ${\bm x}$, and of the density function
$q(\cdot)$ of $Q$ and its derivatives up to the $m$th order, all
evaluated at ${\bm x}$. Thus the log-score is local of order $0$. For
the case that the sample space ${\cal X}$ is an interval on the real
line, \citet{Parry:2012} have characterised all proper local scoring
rules. It was shown that these can all be expressed as a linear
combination of the log-score and a ``key local'' scoring rule, which
is a proper local scoring rule that is {\em homogeneous\/} in the
sense that its value is unchanged if $q$ and (thus) all of its
derivatives are multiplied by some constant $c>0$.

This property of a key local scoring rule has been found useful in
estimation theory. In standard likelihood inference, we need to
compute, and differentiate with the respect to the parameter, the
log-normalising constant of the statistical model distributions; and this
can be computationally prohibitive. But if, instead of log-score, we
use a key local scoring rule, the normalising constant simply does not
figure in the score, so simplifying computation: for some examples,
see \citet{apd/mm:asta,apd/mm:metron}. Applied to model selection,
this suggests a way of evading the problematic normalising constant of
the compleat Bayesian analysis: if we replace the log-score in
\eqref{eq:logBF} by some key local scoring rule, the dependence on the
normalising constant will disappear. Indeed, there is no problem in
computing such a score even for an ``improper'' density $q(\cdot)$,
having infinite integral over ${\cal X}$.

For any $k\geq1$, the simplest key local\footnote{Some conditions on
the behaviour of densities at the boundary of ${\cal X}$ are
required in order for \eqref{eq:genhy} to be a proper scoring rule.}
scoring rule is the order-$2$ rule of
\citet{Hyvarinen:2005}:\footnote{For convenience we have introduced an
extra factor of $2.$}
%
%e16 ###
\begin{equation}
\label{eq:genhy}
S_H({\bm x},Q):=2\Delta\log q({\bm x})+\left\|\bnabla\log q({\bm
x})\right\|^{2},
\end{equation}
where $\bnabla$ denotes gradient, and $\Delta$ is the Laplacian
operator $\sum_{i=1}^k\partial^2/(\partial x_i)^2$. The associated
discrepancy function is
%
%e17 ###
\begin{equation}
\label{eq:hyvdisc}
D_H(p,q) = \int\left\|\bnabla\log p({\bm x}) - \bnabla\log q({\bm
x})\right\|^2
p({\bm x})\,d{\bm x}.
\end{equation}

Variations on \eqref{eq:genhy} and \eqref{eq:hyvdisc} can be obtained, on
first performing a non-linear transformation of the space ${\cal X}$,
or equipping ${\cal X}$ with the structure of a Riemannian space and
reinterpreting $\bnabla$, $\Delta$ accordingly
\citep{Dawid-Lauritzen:2005}. Other key local scoring rules for the
multivariate case are considered by \citet{Parry:2013}. Though such
variations can be useful, here we largely confine ourselves to the
basic Hyv\"arinen score $S_H$ of \eqref{eq:genhy}. However, there remains
some freedom as to how this is applied: for example, we could apply
the multivariate score directly to the data, or to a sufficient
statistic, or cumulate the 1-dimensional scores associated with each
term in the decomposition \eqref{eq:q} \citep{vm/mm/apd:hy}. While such
manipulations have no effect on comparisons based on the log-score
$S_L$, they do typically affect those based on the Hyv\"arinen score
$S_H$. There is thus greater flexibility to apply this in useful
ways, \eg\ to ease computation, to improve robustness to model
misspecification, or (as in \xch{Section~\ref{sec:preq}}{\secref{preq}}) to ensure other desirable
properties such as consistency.

%s6 ###
\section{Multivariate Normal Distribution}
\label{sec:mvn}

Consider in particular the case that the distribution $Q$ of $\bX$ is
multivariate normal:
%
%e18 ###
\begin{equation}
\label{eq:mvn}
\bX\sim\norm_k(\bmu, \Sigma),
\end{equation}
with density
%
%e19 ###
\begin{equation}
\label{eq:mvndens}
q({\bm x}) \propto\exp\{-\half({\bm x}-\bmu)^T \Phi({\bm x}-\bmu
)\}
\end{equation}
where $\Phi:= \Sigma^{-1}$ is the precision matrix, and (in contrast
to the usual convention for likelihood functions) the ``constants''
implicit in the proportionality sign are allowed to depend on the
parameters, $\bmu$ and $\Phi$, but not on ${\bm x}$.

We have
%
%e21 ###
%e20 ###
\begin{eqnarray}
\label{eq:mvnbnabla}
\bnabla\log q &=& - \Phi({\bm x}-\bmu),\\
\label{eq:mvndelta}
\Delta\log q &=& - \tr\Phi
\end{eqnarray}
so that, applying \eqref{eq:genhy},
%
%e22 ###
\begin{equation}
\label{eq:mvnhyv}
S_H({\bm x},Q) = \left\|\Phi({\bm x}-\bmu)\right\|^2 - 2\tr\Phi.
\end{equation}
The associated discrepancy between $P = \norm_k(\bmu_P,\Phi_P^{-1})$
and $Q = \norm_k(\bmu_Q,\Phi_Q^{-1})$ is
%
%e23 ###
\begin{equation}
\label{eq:mvndisc}
D_H(P,Q) = \tr\left(\Phi_P - 2\Phi_Q + \Phi_P^{-1}\Phi_Q^2\right)
+ \left\|\Phi_Q\left(\bmu_P-\bmu_Q\right)\right\|^2.
\end{equation}

The score \eqref{eq:mvnhyv} may be relatively easy to compute if the
model is defined in terms of its precision matrix $\Phi$, as for a
graphical model. Note also that, whereas the log-score $S_L$ in this
case would involve computing the determinant of $\Phi$, this is not
required for $S_H$.

We can now compare different hypothesised multivariate normal
distributions $Q$ for the observed data ${\bm x}$ by means of their
associated $S_H$ scores given by \eqref{eq:mvnhyv}.

%s6.1 ###
\subsection{Univariate Case}
\label{sec:uninorm}

For the univariate case $Q = \norm(\mu,\sigma^2)$ we get
%
%e25 ###
%e24 ###
\begin{eqnarray}
\label{eq:unihyv}
S_H(x,Q) &=&
% \half\phi^2(x-\mu)^2-\phi\\
\frac1 {\sigma^4}\left\{(x-\mu)^2 - 2 \sigma^2\right\},\\
\label{eq:unidisc}
D_H(P,Q) &=&
\frac1 {\sigma_Q^4}
\left\{
\frac{\left(\sigma_P^2-\sigma_Q^2\right)^2}{\sigma_P^2}
+\left(\mu_P-\mu_Q\right)^2
\right\}.
\end{eqnarray}

In this case the Kullback--Leibler discrepancy is given by
%
%e26 ###
\begin{equation}
\label{eq:unikl}
2\kl(P,Q) = \frac{\sigma_P^2}{\sigma_Q^2} + \log\frac{\sigma
_Q^2}{\sigma_P^2}
+ \frac{(\mu_P-\mu_Q)^2}{\sigma_Q^2}-1.
\end{equation}
Using $\log x \leq x-1$, we find
%
%e27 ###
\begin{equation}
\label{eq:klhell}
D_H(P,Q) \geq\frac2 {\sigma_Q^2} \kl(P,Q).
\end{equation}

In the context of \xch{Section~\ref{sec:preq}}{\secref{preq}}, where $P$ and $Q$ are both Gaussian
processes for $(X_1, X_2, \ldots)$, we can apply \remref{kostas} to
deduce that prequential model comparison between $P$ and $Q$ based on
the Hyv\"arinen score will be consistent whenever $P$ and $Q$ are
mutually singular, and (writing $\sigma_{Q,i}^2$ for the variance,
under $Q$, of $X_i$, given $(X_1,\ldots,X_{i-1})$),
\[
\lim\inf_{i\rightarrow\infty}\sigma_{Q,i}^2>0\quad\as{P},
\]
and likewise with $P$ and $Q$ interchanged.

%s7 ###
\section{Bayesian Model}
\label{sec:bayesmod}

For the Bayesian the parameter is a random variable, $\Theta$ say.
Let the statistical model have density
\begin{math}
p({\bm x}\mid\theta)
\end{math}
at $\bX={\bm x}$, when $\Theta=\theta$. If the prior density is
$\pi(\theta)$, the marginal density of ${\bm x}$ is
\begin{displaymath}
q({\bm x}) = \int p({\bm x}\mid\theta)\,\pi(\theta)\,d\theta.
\end{displaymath}
Then we find
\begin{eqnarray*}
\frac{\partial\log q({\bm x})}{\partial x_i} &=& \E\left\{\left
.\frac{\partial\log p({\bm x}\mid\Theta)}{\partial x_i}
\right| \bX= {\bm x}\right\},\\
\frac{\partial^2 \log q({\bm x})}{\partial x_i^2} &=& \E\left\{
\left.
\frac{\partial^2\log p({\bm x}\mid\Theta)}{\partial x_i^2} \right|
\bX= {\bm x}\right\}
+ \var\left\{\left.\frac{\partial\log p({\bm x}\mid\Theta
)}{\partial x_i}
\right| \bX= {\bm x}\right\}
\end{eqnarray*}
where the expectations and variances are taken under
the posterior distribution of $\Theta$ given $\bX= {\bm x}$, having
density $\pi(\theta\mid{\bm x}) = p({\bm x}\mid
\theta)\,\pi(\theta)/q({\bm x})$. This yields
%
%e29 ###
%e28 ###
\begin{eqnarray}
\label{eq:smix}
\nonumber
S_H({\bm x},Q) &=& \sum_i\left(\E\left[\left.
2\frac{\partial^2\log p({\bm x}\mid\Theta)}{\partial x_i^2}
+ 2\left\{\frac{\partial\log p({\bm x}\mid\Theta)}{\partial
x_i}\right\}^2
\right|
\bX= {\bm x}\right]\right.\\
&&{}\left.- \left[\E\left\{\left.\frac{\partial\log p({\bm
x}\mid\Theta)}{\partial x_i}
\right| \bX= {\bm x}\right\}\right]^2\right)\\
\nonumber
&=& \E\left\{\left.S_H({\bm x}, P_\Theta)\right| \bX={\bm
x}\right\}\\
&&{}+ \sum_i
\var\left\{\left.\frac{\partial\log p({\bm x}\mid\Theta
)}{\partial
x_i}\right| \bX={\bm x}\right\}.
\end{eqnarray}
%

%s7.1 ###
\subsection{Exponential Family}
\label{sec:expfam}

Suppose further that the model is an exponential family with natural
statistic $\bT= {\bm t}({\bm x})$:
%
%e30 ###
\begin{equation}
\label{eq:expgen}
\log p({\bm x}\mid{\bm
\theta}) = a({\bm x}) + b({\bm
\theta}) + \sum_{j=1}^k \theta_j t_j({\bm x}).
\end{equation}

Define $\bmu\equiv\bmu({\bm x})$, $\Sigma\equiv\Sigma({\bm x})$
to be the
posterior mean-vector and dispersion matrix of $\bTheta$, given
$\bX={\bm x}$. Then we obtain
\begin{displaymath}
S_H({\bm x},Q) = 2\Delta a + 2\bd\transp\bmu
+ \left\|\bnabla a + J\bmu\right\|^2 + 2\tr J\Sigma J \transp
\end{displaymath}
with $\bd\equiv\bd({\bm x}):= (\Delta t_j)$, $J\equiv J({\bm x}) :=
(\partial t_j({\bm x})/\partial x_i)$.

For the special case $\bT= \bX$, this becomes
\begin{displaymath}
S_H({\bm x},Q) = 2\Delta a
+ \left\|\bnabla a + \bmu\right\|^2 + 2\tr\Sigma.
\end{displaymath}
%

%s8 ###
\section{Linear Model: Variance Known}
\label{sec:linmodknown}

Consider the following normal linear model for a data-vector $\bY=
(Y_1, \ldots, Y_n)\transp$:
%
%e31 ###
\begin{equation}
\label{eq:lm}
\bY\sim\norm(X{\bm
\theta}, \sigma^2 I),
\end{equation}
where $X$ $(n \times p)$ is a known design matrix of rank $p$, and
${\bm
\theta}\in\R^p$ is an unknown parameter vector. In this section, we
take $\sigma^2$ as known.

%s8.1 ###
\subsection{Multivariate Score}
\label{sec:joint}

Consider giving ${\bm
\theta}$ a normal prior distribution:
%
%e32 ###
\begin{equation}
\label{eq:unif}
{\bm
\theta}\sim\norm(\bm{m},V).
\end{equation}

The marginal distribution $Q$ of $\bY$ is then
%
%e33 ###
\begin{equation}
\label{eq:margy}
\bY\sim\norm(X\bm{m}, XVX\transp+ \sigma^2 I)
\end{equation}
with precision matrix
\begin{eqnarray*}
\Phi&=& (XVX\transp+ \sigma^2 I)^{-1}\\
&=& \sigma^{-2}\left\{I - X\left(X\transp X + \sigma^2 V^{-1}\right
)^{-1}X\transp\right\}
\end{eqnarray*}
on applying the Woodbury matrix inversion lemma ((10) of
\citet{Lindley-Smith:1972}).

An ``improper'' prior can now be generated by allowing $V^{-1}
\rightarrow0$, yielding
\begin{displaymath}
\Phi= \sigma^{-2} \Pi
\end{displaymath}
where
\begin{displaymath}
\Pi:= I - XAX\transp,
\end{displaymath}
with $A: = (X\transp X)^{-1}$, is the projection matrix onto the space
of residuals.

Although this $\Phi$ is singular, and thus cannot arise from any
genuine dispersion matrix, there is no problem in using it in
\eqref{eq:mvnhyv}. We obtain
%
%e34 ###
\begin{equation}
\label{eq:normhyvimproper}
S_H({\bm y}, Q) = \frac1 {\sigma^4}\left(\rss- 2\nu\sigma^2\right)
\end{equation}
where $\rss$ is the usual residual sum-of-squares for model
\eqref{eq:lm}, on $\nu:= n-p$ degrees of freedom. Note that, unlike
marginal log-likelihood, this is well-defined, in spite of the fact
that we have not specified a ``normalising constant'' for the improper
prior density. This is, of course, a consequence of the homogeneity of
the Hyv\"arinen score $S_H$.

The above analysis is not, however, applicable if $\rank(X) < p$---in
particular, whenever $n<p$. Taking $V^{-1} \rightarrow0$ is
equivalent to using an improper prior density $\pi({\bm
\theta}) \equiv c$,
with $0<c<\infty$. When $X$ is of rank $p$, the integral formally
defining the marginal density of $\bY$ is finite for each ${\bm y}$ (even
though the resulting density is itself improper). However, when
$\rank(X)<p$ this integral is infinite at each ${\bm y}$, so that no
marginal joint density---even improper---can be defined.

Using the criterion \eqref{eq:normhyvimproper} for comparing different
normal linear models, all with the same known residual variance
$\sigma^2$, is equivalent to comparing them in terms of their
penalised scaled residual sum-of-squares, $(\rss/\sigma^2) +
2p$---which is just Akaike's \aic for this known-variance case.
(However, when $\sigma^2$ varies across models, the criterion
\eqref{eq:normhyvimproper} is no longer equivalent to \aic.)

Now it is well known that \aic is not a consistent model selection
criterion. As an example, consider the two models:
\begin{eqnarray*}
M_1 &:& Y_i \sim\norm(0,1),\\
M_2 &:& Y_i \sim\norm(\theta,1).
\end{eqnarray*}
Then, with $\Ybar$ denoting the sample mean $\sum_i Y_i/n$, we have
$\aic_1 = \sum_i Y_i^2$, $\aic_2 = \sum_i(Y_i-\Ybar)^2+2$, so that
$\aic_1-\aic_2 = n\Ybar^2-2$. When $M_1$ holds, this is distributed,
for any $n$, as $\chi^2_1 -2$, which has a non-zero probability of
being positive, and thus favouring the incorrect model $M_2$.

Hence the above approach does not seem an entirely satisfactory
solution to the model-selection problem.

%s8.2 ###
\subsection{Prequential Score}
\label{sec:seq} In an attempt to restore consistent model selection,
we turn to the prequential approach.

In \eqref{eq:lm}, let ${\bm x}_i$ be the $i$th row of $X$, and $X^i$ the
matrix containing the first $i$ rows of~$X$. Assuming $X$ is of full
rank, then $X^i$ is of full rank if and only if $i \geq p$.

Define, for $i\geq p\,$:
%
%e37 ###
%e36 ###
%e35 ###
\begin{eqnarray}
\label{eq:A}
A_i &:=& \left\{(X^i)\transp(X^i)\right\}^{-1},\\
\label{eq:theta}
\widehat{\bm
\theta}_i &:=& A_i (X^i)\transp\bY^i%\\
\end{eqnarray}
and, for $i > p\,$:
%
%e40 ###
%e39 ###
%e38 ###
\begin{eqnarray}
\label{eq:eta}
\eta_{i} &:=& {\bm x}_{i}\transp\widehat{\bm
\theta}_{i-1},\\
\label{eq:k}
k_{i}^2 &:=& 1+{\bm x}_{i}\transp A_{i-1} {\bm x}_{i} = (1-{\bm x}_i
\transp A_i {\bm x}_{i})^{-1},\\
\label{eq:z}
Z_{i} &:=& k_{i}^{-1}(Y_i - \eta_i)
\end{eqnarray}
(where the identity in \eqref{eq:k} follows from the Woodbury
lemma).\vadjust{\eject}

Then for the improper prior \eqref{eq:unif} with $V^{-1}\rightarrow0$,
the predictive distribution of $Y_i$, given $\bY^{i-1}$, is
%
%e41 ###
\begin{equation}
\label{eq:predy}
Y_i \sim\norm(\eta_i, k_i^2\sigma^2)\quad(i > p).
\end{equation}
That is, in the predictive distribution the $(Z_i: i=p+1, \ldots,n)$
are \iid$\norm(0, \sigma^2)$ variables (which property also holds in
the sampling distribution, conditionally on ${\bm
\theta}$); moreover,
$\rss= \sum_{i=p+1}^n Z_i^2$.

Note that, under the model \eqref{eq:lm}, $\eta_i$ has expectation
${\bm x}_i\transp\theta$ and variance $k_i^2-1$. So the predictive
distribution \eqref{eq:predy} and the true distribution will be
asymptotically indistinguishable (the property of ``prequentially
consistent'' estimation---see \citet{dawi:1984}) if and only if
%
%e42 ###
\begin{equation}
\label{eq:kk}
k_i^2\rightarrow1\mbox{ as }i \rightarrow\infty.
\end{equation}
This we henceforth assume, for any model under consideration.

For $i>p$, the incremental score \eqref{eq:unihyv} associated with
\eqref{eq:predy} is
%
%e43 ###
\begin{equation}
\label{eq:increm}
S_i = \frac{T_i}{k_i^2\sigma^2}
\end{equation}
where
%
%e44 ###
\begin{equation}
\label{eq:ti}
T_i := \frac{Z_i^2}{\sigma^2} -2.
\end{equation}
Under any distribution in the model, the $(T_i)$ are independent, with
%
%e46 ###
%e45 ###
\begin{eqnarray}
\label{eq:tmean}
\E(T_i) &=& -1,\\
\var(T_i) &=& 2.
\end{eqnarray}

As discussed in \xch{Section~\ref{sec:preq}}{\secref{preq}}, minimising the cumulative prequential
score
%
%e47 ###
\begin{equation}
\label{eq:normpreq}
S^* := \sum_{i} S_i
\end{equation}
should typically yield consistent model choice. We investigate this
in more detail in \xch{Section~\ref{sec:preqcons}}{\secref{preqcons}} below.

Expression~\eqref{eq:increm} is only defined for an index $i$ exceeding
the dimensionality of the model. When comparing models of differing
dimensionalities, we should ensure the identical criterion is used for
each. We could just cumulate the $S_i$ over indices $i$ exceeding the
greatest model dimension, $p_{\max}$ say, but this risks losing
relevant information. To restore this, we might add to that sum the
multivariate score \eqref{eq:normhyvimproper} computed, for each model,
for the first $p_{\max}$ observations.

%s8.3 ###
\subsection{Multivariate or Prequential?}
\label{sec:or} The multivariate score \eqref{eq:normhyvimproper} can be
expressed as the sum of rescaled incremental scores:
%
%e48 ###
\begin{equation}
\label{eq:sgen}
S_H({\bm y}, Q) = \frac1 {\sigma^2} \sum_{i=p+1}^n T_i= \sum
_{i=p+1}^{n} k_i^2 S_i,
\end{equation}
and the scaling factor $k_i^2$ has been assumed to satisfy \eqref{eq:kk}.
It would thus seem that \eqref{eq:sgen} is asymptotically equivalent to
\eqref{eq:normpreq}, and thus that model selection by minimisation of the
multivariate score \eqref{eq:normhyvimproper} should be consistent for
model choice. However, we have seen that this is not the case.

Further analysis dispels this paradox. The difference between the
prequential and the multivariate score, up to time $n$, is
%
%e49 ###
\begin{equation}
\label{eq:diff}
S^* - S_H = \frac1 {\sigma^2} \sum_{i=p}^n \left(\frac1
{k_i^2}-1\right) T_i.
\end{equation}
Under any distribution in the model, this has expectation
\[
\frac1{\sigma^2} \sum_{i} \left(1 - \frac1 {k_i^2}\right) = \frac
1{\sigma^2} \sum_{i} {\bm x}_i\transp A_i{\bm x}_i,
\]
and variance
\[
\frac2{\sigma^4} \sum_{i=1}^n \left({\bm x}_i\transp A_i{\bm x}_i
\right)^2.
\]

Suppose the $({\bm x}_i)$ look like a random sample from a $p$-variate
distribution, with $\E{\bm x}_i{\bm x}_i\transp= C$. Then, for large $i$,
\begin{displaymath}
\E\left(i{\bm x}_i\transp A_i{\bm x}_i\right) = \E\tr\left\{\left
(\sum_{j=1}^i{\bm x}_j
{\bm x}_j\transp/i\right)^{-1}{\bm x}_i {\bm x}_i\transp\right\}
\approx\tr C^{-1}C = p.
\end{displaymath}
So $1-1/k_i^2 \approx p/i$; in particular \eqref{eq:kk} holds. Then
$\E(S^* - S_H) \approx(p /\sigma^2) \sum_{i=p}^n i^{-1} \approx
p(\log n)/\sigma^2$. A~similar analysis shows $\var(S^* - S_H) <
\infty$. Thus, under the model, $S^* - S_H \approx p(\log
n)/\sigma^2$. So, contrary to first impressions, the difference
between the cumulative prequential score $S^*$ and the multivariate
score $S_{H}$ diverges to infinity (at a logarithmic rate) under any
true model.

%s8.4 ###
\subsection{Prequentially Consistent Model Selection}
\label{sec:preqcons}

We now consider the asymptotic behaviour of the cumulative prequential
score $S^*$, given by \eqref{eq:normpreq}, when used to select between
two models, $M_1$ and $M_2$, both of the general form \eqref{eq:lm}, when
$M_1$ is true. Let these models have respective dimensions $p_1$ and
$p_2$, and variances $\sigma_1^2$ and $\sigma_2^2$. Let $Z_i$,
$k_i^2$, as defined above, refer to $M_1$, and denote the
corresponding quantities for $M_2$ by, respectively, $W_i$, $h_i^2$.
Let $S^*_1$, $S^*_2$ denote the cumulative prequential scores for
$M_1$, $M_2$, respectively. We assume conditions on the regressors, as
discussed above, under which
%
%e51 ###
%e50 ###
\begin{eqnarray}
\label{eq:approxk}
1-1/k_i^2 &\approx& p_1/i,\\
\label{eq:approxh}
1-1/h_i^2 &\approx& p_2/i.
\end{eqnarray}

Since the $(Y_i)$ are independent normal variables with variance
$\sigma_1^2$, and the $(Z_i)$ and $(W_i)$ are, in each case,
constructed from the $(Y_i)$ by an orthogonal linear transformation,
we will have
%
%e53 ###
%e52 ###
\begin{eqnarray}
\label{eq:zdist}
Z_i &\sim& \norm(0,\sigma_1^2)\quad\mbox{independently,}\\
\label{eq:wdist}
W_i &\sim& \norm(\nu_i,\sigma_1^2)\quad\mbox{independently,}
\end{eqnarray}
where the $(Z_i)$ have mean $0$ since $M_1$ is true, whereas the
$(\nu_i)$ may be non-zero.

Let $p = \max\{p_1, p_2\}$. Apart from a finite contribution from
some initial terms, the difference in prequential scores, up to time
$n$, is
%
%e54 ###
\begin{equation}
\label{eq:preqdiff}
S^*_2 - S^*_1 = \frac1 {\sigma_2^2}
\sum\frac1{h_i^2}\left(\frac{W_i^2}{\sigma_2^2}-2\right)
- \frac1 {\sigma_1^2}
\sum\frac1{k_i^2}\left(\frac{Z_i^2}{\sigma_1^2}-2\right)
\end{equation}
where $\sum$ denotes $\sum_{i=p+1}^n$.

On account of \eqref{eq:zdist} and \eqref{eq:wdist}, this has expectation
%
%e55 ###
\begin{equation}
\label{eq:expdiff}
\E(S^*_2 - S^*_1) = \frac1 {\sigma_2^4}\sum\frac{\nu_i^2}{h_i^2}
+ \frac{(\sigma_1^2-\sigma_2^2)^2}{\sigma_1^2\sigma_2^4}\sum\frac1{h_i^2}
+ \frac1{\sigma_1^2}\sum\left(\frac1 {k_i^2}-\frac1 {h_i^2}\right).
\end{equation}

We now consider various cases for $M_2$.

%s8.4.1 ###
\subsubsection{$M_2$ true}
\label{sec:M2true}

If the true distribution also belongs to $M_2$ (as well as to $M_1$),
then we must have $\sigma_2^2 = \sigma_1^2=\sigma^2$ say, and $\nu_i
\equiv0$. Then \eqref{eq:expdiff} reduces to
%
%e56 ###
\begin{equation}
\label{eq:truediff}
\E(S^*_2 - S^*_1) = \frac1{\sigma^2}\sum\left(\frac1
{k_i^2}-\frac1 {h_i^2}\right).
\end{equation}
On account of \eqref{eq:approxk} and \eqref{eq:approxh}, this behaves
asymptotically as $(p_2-p_1)(\log n)/\sigma^2 + o(\log n)$. Also, an
analysis similar to that in \xch{Section~\ref{sec:or}}{\secref{or}} shows that $\var(S^*_2 -
S^*_1)$ is bounded, so that
%
%e57 ###
\begin{equation}
\label{eq:truediffp}
S^*_2 - S^*_1 = \frac{(p_2-p_1)\log n}{\sigma^2} + o_p(\log n).
\end{equation}
(Compare this with the behaviour of the log-Bayes factor in this case,
which, in line with \eqref{eq:true}, is asymptotic to $\half
(p_2-p_1)\log
n$ when the within-model priors are proper).

In particular, when comparing finitely many true models of different
dimensions, minimising the cumulative prequential score will
consistently favour the simplest true model, at rate $\propto\log n$.

We now consider cases where $M_2$ is false. For simplicity we confine
attention to the expected score.

%s8.4.2 ###
\subsubsection{Wrong variance}
\label{sec:samemean}
Suppose first that $M_2$ has the wrong variance $\sigma_2^2 \neq
\sigma_1^2$. In this case the first term in \eqref{eq:expdiff} is
non-negative, the second is positive of order $n$, and the third term
is again of order $\log n$. The true model $M_1$ is thus favoured, at
rate $\propto n$---just as for the log-score in the case of proper
priors.

%s8.4.3 ###
\subsubsection{Right variance, wrong mean}
\label{sec:wrongmean}
Suppose now $\sigma_2^2 = \sigma_1^2= \sigma^2$, but the
data-generating distribution does not have the mean-structure of
$M_2$. We note that the log-Bayes factor \eqref{eq:true} will tend to
infinity (almost surely), so selecting the true model $M_1$, if and
only if $\sum\nu_i^2 = \infty$.

In this case we have
%
%e58 ###
\begin{equation}
\label{eq:expdiffwrongmean}
\E(S^*_2 - S^*_1) = \frac1 {\sigma^4}\sum\frac{\nu_i^2}{h_i^2}
+ \frac1{\sigma^2}\sum\left(\frac1 {k_i^2}-\frac1 {h_i^2}\right),
\end{equation}
where $\nu_i \not\equiv0$ and $h_i^2 \not\equiv k_i^2$.

The first term in \eqref{eq:expdiff} is non-negative, while the second
term behaves asymptotically as $(p_2-p_1)(\log n)/\sigma^2$. In
particular, if $p_2 > p_1$, then \eqref{eq:expdiff} increases at rate at
least $(p_2-p_1)(\log n)/\sigma^2$, so favouring the true model.

However, things are more delicate if $p_2 < p_1$. In this case, if
$\sum(\nu_i/h_i)^2$ increases sufficiently slowly --- specifically, at
rate less than $(p_1-p_2)\sigma^2(\log n)$ --- then the increased
simplicity of model $M_2$ more than compensates for the slight
inaccuracy in its mean-structure, leading to selection of the slightly
incorrect model $M_2$.

The case $p_2=p_1$ requires a still more delicate analysis, which we
shall not pursue here.

\paragraph{Example}
As an example, consider again the comparison of the models $M_1$ and
$M_2$ of \xch{Section~\ref{sec:joint}}{\secref{joint}}.

Under $M_1$, with $Y_i \sim\norm(0,1)$, we have $p_1 = 0$, $k_i^2
=1$, $Z_i = Y_i$. In this special case the cumulative prequential
score $S^*_1$ is identical to the multivariate score $S_{H,1}$.

For model $M_2$, with $Y_i \sim\norm(\theta,1)$ ($\theta\neq0$), we
have $p_2 = 1$, $h^2_i = i/(i-1)$, $W_i = \{(i-1)/i\})^\half(Y_i -
\overline Y_{i-1}) \sim\norm(0,1)$. Although $h_i^2 \rightarrow1$,
\begin{math}
S_2^* - S_{H,2}
\end{math}
has (under any distribution in $M_2$, and hence also under the simpler
model $M_1$) expectation $\sum_{i=1}^n i^{-1} \approx\log n$, and
bounded variance 2$\sum_{i=1}^n i^{-2} \approx\pi^2/3$. Since $S^*_1
\equiv S_{H,1}$, and we have seen that $S_{H,2} - S_{H,1}$ is bounded
in probability under $M_1$, $S^*_2 - S^*_1$ diverges to infinity (at
rate $\log n$) under $M_1$---so consistently selecting the correct
model $M_1$.

On the other hand, under $M_2$ we have
\begin{math}
S^*_2 = \sum_i(1-1/i) (W_i^2 -2) = -n + o_p(n),
\end{math}
while $S^*_1 = \sum_i (Y_i^2-2) = n(\theta^2-1) + o_p(n)$, so that
$S^*_2 - S^*_1 = -n\theta^2 + o_p(n)$, which thus diverges to
$-\infty$ (this time at rate $n$)---so now consistently selecting the
correct model~$M_2$.

In summary, although the multivariate score \eqref{eq:normhyvimproper} is
more straightforward to compute, if consistent model selection is
regarded as an important criterion then the prequential score is to be
preferred.

%s9 ###
\section{Linear Model: Variance Unknown}
\label{sec:linmodunknown}

Now suppose we don't know $\sigma^2$ in \eqref{eq:lm}. With $\phi=
1/\sigma^2$, we have model density
%
%e59 ###
\begin{equation}
\label{eq:moddens}
p({\bm y}\cd\theta, \phi) \propto\phi^{\half n}\exp-\frac\phi
2\left\{
R + ({\bm
\theta}- \widehat{\bm
\theta})\transp X\transp X ({\bm
\theta}-\widehat{\bm
\theta})
\right\}
\end{equation}
where $\rss= {\bm y}\transp\Pi{\bm y}$, with $\Pi= I-XAX\transp$,
is the
residual sum of squares, on $\nu= n-p$ degrees of freedom.

The standard improper prior for this model is $\pi({\bm
\theta}, \phi)
\propto\phi^{-1}$. Multiplying \eqref{eq:moddens} by this and
integrating over $({\bm
\theta},\phi)$ yields the (improper) joint
predictive density\footnote{For the integral formally defining this
density to be finite at each point we require $\rank(X)\geq p+1$.}
%
%e60 ###
\begin{equation}
\label{eq:predh}
p({\bm y}) \propto\rss^{-\half\nu},
\end{equation}
with logarithm (up to a constant)
%
%e61 ###
\begin{equation}
\label{eq:predlh}
l = {-\half\nu} \log\rss .
\end{equation}
Writing $\br= \Pi{\bm y}$ (the residual vector), we find
%
%e63 ###
%e62 ###
\begin{eqnarray}
\label{eq:l1}
\frac{\partial l}{\partial y_i} &=& - \frac{\nu r_i}\rss,\\
\label{eq:l2}
\frac{\partial^2 l}{\partial y_i^2} &=& \nu\left(\frac
{2r_i^2}{\rss^2} - \frac{\pi_{ii}} \rss\right),
\end{eqnarray}
and so (noting $\sum_i \pi_{ii} = \nu$) the multivariate score
\eqref{eq:genhy} is
%
%e64 ###
\begin{equation}
\label{eq:mhyvh}
S_H = - \frac{(\nu-4)}{\widehat\sigma^2}
\end{equation}
where $\widehat\sigma^2 = \rss/\nu$ is the usual unbiased estimator of
$\sigma^2$. So long as at least one model under consideration has
$\nu> 4$ (a very reasonable requirement), choosing a model by
minimisation of the predictive score is thus equivalent to minimising
$J := {\widehat\sigma^2}/(\nu-4)$.

Again, this model selection criterion is typically inconsistent. Thus
consider the comparison between models $M_1$ and $M_2$ of
\xch{Section~\ref{sec:joint}}{\secref{joint}}, now extended to have unknown variance $\sigma^2$. We
have
%
%e66 ###
%e65 ###
\begin{eqnarray}
\label{eq:u1}
J_1 &=& \frac{(n-1)S^2 + n\Ybar^2}{n(n-4)},\\
\label{eq:u2}
J_2 &=& \frac{S^2}{(n-5)}
\end{eqnarray}
where $S^2 := \sum_{i=1}^n(Y_i-\Ybar)^2/(n-1)$ is a consistent
estimate of $\sigma^2$ under either model. Then $M_2$ is preferred if
$J_2 < J_1$, which holds when
%
%e67 ###
\begin{equation}
\label{eq:u1u2}
\frac{n\Ybar^2}{\sigma^2} > \frac{2n-5}{(n-5)}\,\frac{S^2}{\sigma
^2} \approx2
\end{equation}
for large $n$. But, under $M_1$, ${n\Ybar^2}/{\sigma^2} \sim
\chi^2_1$, so that there is a positive probability of the inequality
\eqref{eq:u1u2} holding, so favouring the more complex model $M_2$.

%s9.1 ###
\subsection{Prequential Score}
\label{sec:sequ} From \eqref{eq:predh}, as a function of $y_i$ the
predictive density of $Y_i$ given ${\bm y}^{i-1}$ (for $i > p$) is
%
%e68 ###
\begin{equation}
\label{eq:student}
p(y_i \cd{\bm y}^{i-1}) \propto\rss_i^{-\half\nu_i}
= \left(\rss_{i-1} + z_i^2\right)^{-\half\nu_i}
\end{equation}
where $\rss_i$ is the residual sum-of-squares based on ${\bm y}^i$, on
$\nu_i := i-p$ degrees of freedom, and $z_i = k_{i}^{-1}(y_i -
\eta_i)$, as given by \eqref{eq:eta}--\eqref{eq:z}. Applying the univariate
case of \eqref{eq:genhy} now yields (for $i>p$) the incremental score:
%
%e70 ###
%e69 ###
\begin{eqnarray}
\label{eq:inch}
S_i &=& \frac{\nu_i\left\{\left(4+\nu_i\right)Z_i^2-2\rss
_i\right\}}
{k_i^2 \rss_i^2}\\
&=& \frac{\left(1+\frac4{\nu_i}\right)Z_i^2 - 2 s_i^2}
{k_i^2 s_i^4},
\end{eqnarray}
where $s^2_{i} := \rss_{i}/\nu_{i}$ is the residual mean square, based
on $\bY^i$, under the model. The prequential score is now obtained by
cumulating $S_i$ over $i$. Once again, under reasonable conditions
this can be expected to yield consistent model
selection.\footnote{Again, an additional contribution of the form of
\eqref{eq:mhyvh}, computed for an initial string of observations, could
be incorporated to ensure fair comparison between models of
different dimension.}

We investigate this consistency property further, for the special case
of comparing two true models of different dimensions $p_1 < p_2$. We
saw in \xch{Section~\ref{sec:preqcons}}{\secref{preqcons}} that in this case, when the variance
$\sigma^2$ is known (and under reasonable assumptions on the models)
the prequential Hyv\"arinen score prefers the simpler model over the
more complex model, at rate \mbox{$(p_2-p_1)(\log n)/\sigma^2$}.

We consider the asymptotic behaviour of $S^* := \sum_{i=p+1}^n S_i$
under a distribution in the model.\footnote{Our analysis is
indicative, rather than fully rigorous.} In this case the $(Z_i: i
> p)$ are \iid as $\norm(0,\sigma^2)$.

Writing $U_i := (Z_i^2/\sigma^2)-1$, so that $\E(U_i)=0$, $\E(U_i^2) =
2$, we have
\begin{equation}
\label{eq:zform}
k_i^2\sigma^2 S_{i} = \frac{\left(1+\frac4 {\nu_i}\right
)(U_i+1)-2(\overline U_i+1)}{(\overline U_i+1)^2}
\end{equation}
with $\overline U_i := \nu_i^{-1}\sum_{j=p+1}^i U_j$ (where $\nu_i =
i-p$). Now $\overline U_i = O_p(i^{-\half})$. Expanding
\eqref{eq:zform} as a power series in $\overline U_i$ gives
%
%e71 ###
\begin{equation}
\label{eq:taylor}
k_i^2\sigma^2 S_{i} = \sum_{r=0}^\infty(-1)^r \overline U_i^r\left\{
(r+1)\left(1+ \frac4 {\nu_i}\right)(U_i+1) - 2
\right\}
\end{equation}
so that
%
%e75 ###
%e74 ###
%e73 ###
%e72 ###
\begin{eqnarray}
\label{eq:prob1}
k_i^2\sigma^2 S_{i} - (U_i-1) &=& \frac4 {\nu_i} + \frac{4U_i} {\nu
_i}\\
\label{eq:prob2}
&&{} - 2 \overline U_i \left(U_i + \frac4 {\nu_i} + \frac{4U_i}
{\nu_i}\right)\\
\label{eq:prob3}
&&{} + \overline U_i^2\left(1 + 3U_i + \frac{12}{\nu_i} + \frac
{12U_i}{\nu_i}\right)\\
&&{} + O_p(i^{-3/2}).
\end{eqnarray}

Noting
%
%e78 ###
%e77 ###
%e76 ###
\begin{eqnarray}
\label{eq:uu1}
\E(\overline U_i^2) &=& 2/{\nu_i},\\
\label{eq:uu2}
\E(\overline U_i U_i) &=& 2/{\nu_i},\\
\label{eq:uu3}
\E(\overline U_i^2 U_i) &=& 8 /{\nu_i^2},
\end{eqnarray}
we compute
%
%e79 ###
\begin{equation}
\E\left\{k_i^2\sigma^2 S_{i} - (U_i-1)\right\} = \frac2 {i} + O(i^{-3/2}),
\label{eq:taylorE}
\end{equation}
%
% \todo{Mathematica gives
% $(2/\nu_i)+(32/\nu_i^2)+(96/\nu_i^3)+\ldots$}
whence, on account of \eqref{eq:kk},
%
%e80 ###
\begin{equation}
\label{eq:plusdiff}
\E\left(S^* - S_0^*\right) = 2(\log n)/\sigma^2 + O(1)
\end{equation}
where $S_0^* = \sum_{i=p+1}^n (U_i-1)/(k_i^2\sigma^2)$ is the
cumulative prequential score \eqref{eq:normpreq} for the submodel in
which the correct variance $\sigma^2$ is known.

In the remainder of this section, we argue that $S^* - S_0^*$ differs
from its expectation \eqref{eq:plusdiff} by $O_p\{(\log n)^\half\}$.
Computations have been executed and/or checked using the software {\em
Mathematica\/}.

On cumulating the term $\propto U_i/\nu_i$ in \eqref{eq:prob1} we obtain
variance $\propto\sum_{i=p+1}^\infty\nu_i^{-2}$, which is finite.
So this yields a contribution that is $O_p(1)$.

Consider now the term $ \propto\overline U_i U_i$ in \eqref{eq:prob2}.
We find $\var(\overline U_i U_i) = 4 /{\nu_i} + O(\nu_i^{-2})$, and
$\overline U_i U_i$ and $\overline U_j U_j$ are uncorrelated for
$i\neq j$. Hence on cumulating the term $\overline U_i U_i$ in
\eqref{eq:prob2} from $i=p+1$ to $n$ we get variance $\approx
\sum_{i=p+1}^n 4 / {\nu_i} \approx4\log n$. Thus the random
variation in this term contributes $O_p\{(\log n)^\half\}$ to $S^* -
S_0^*$.

There is also a term $\propto\overline U_i/\nu_i$ in \eqref{eq:prob2}.
Since $\overline U_i/\nu_i = O_p(i^{-3/2})$, its cumulative sum is
$O_p(n^{-\half})$.

Now consider \eqref{eq:prob3}. We look first at the term $\overline
U_i^2$. We compute $\var\{(\overline U_i)^2\} = {8}/\nu_i^2 +
48/\nu_i^{3} = \lambda_i$, say; and, for $i<j$,
\[
\cov\{(\overline U_i)^2, (\overline U_j)^2\} = \left(\frac{\nu
_i}{\nu_j}\right)^2\lambda_i.
\]
Hence
\begin{eqnarray*}
\var\left\{\sum_{i=p+1}^n (\overline U_i)^2\right\} &=& \sum
_{i=p+1}^n \lambda_i
+ 2\sum_{i=p+1}^n\sum_{j=i+1}^n \left(\frac{\nu_i}{\nu_j}\right
)^2\lambda_i\\
&\leq& 56\left\{\sum_{i=1}^\nu i^{-2} + 2\sum_{i=1}^\nu\sum
_{j=i+1}^\nu j^{-2}\right\}
\end{eqnarray*}
(with $\nu=n-p$), since $\lambda_i \leq56/\nu_i^2$. We have
$\sum_{i=1}^\infty i^{-2} < \infty$, and, for large $i$,
$\sum_{j=i+1}^\nu j^{-2} < \sum_{j=i+1}^\infty j^{-2} \approx i^{-1}$.
So $\var\{\sum_{i=p+1}^n (\overline U_i)^2\}$ is of order
$\log n$, and cumulating the term $\overline U_i^2$ in \eqref{eq:prob3}
again makes a contribution $O_p\{(\log n)^\half\}$ over and above its
expectation.

Now consider the term $U_i\overline U_i^2$ in \eqref{eq:prob3}. We have
%
%e81 ###
\begin{equation}
\label{eq:varubar2ubar}
\var(U_i\overline U_i^2) = \frac{24}{\nu_i^2} + \frac{1024}{\nu
_i^3}+\frac{4928}{\nu_i^4}
\end{equation}
and, for $i<j$,
%
%e82 ###
\begin{equation}
\label{eq:covubar2ubar}
\cov(U_i\overline U_i^2,U_j\overline U_j^2) = \frac{48(\nu_i+4)}{\nu
_i^2\nu_j^2}.
\end{equation}

By an argument similar to that for $\overline U_i^2$, we find that
cumulating the term $U_i\overline U_i^2$ in \eqref{eq:prob3} again makes
a contribution $O_p\{(\log n)^\half\}$ (over and above its
expectation).

Putting everything together, we have
%
%e83 ###
\begin{equation}
\label{eq:finaldiff}
S^* - S_0^* = 2(\log n)/\sigma^2 + O_p\{(\log n)^\half\}.
\end{equation}

Now we have shown in \xch{Section~\ref{sec:preqcons}}{\secref{preqcons}} that, for comparing two true
models $M_1$ and $M_2$ with known variance $\sigma^2$ and respective
dimensions $p_1 < p_2$, under conditions on the behaviour of the
$({\bm x}_i)$, the difference in their cumulative prequential scores
$S_0^*$ behaves asymptotically as $(p_2-p_1)(\log n)/\sigma^2$.
Since, from \eqref{eq:finaldiff}, the difference between the scores for
the unknown and known variance cases is $2(\log n)/\sigma^2 + o_p(\log
n)$ for any model, the identical behaviour applies in the case that
the variance is unknown.

%s10 ###
\section{Discussion}
\label{sec:disc} Replacement of the traditional log-score by a proper
scoring rule, applied to the predictive density, supplies a general
method for avoiding some of the difficulties associated with the use
of improper prior distributions for conducting Bayesian model
comparison and selection. In particular, use of a homogeneous scoring
rule, such as the Hyv\" arinen rule, supplies a method for taming the
otherwise wild behaviour associated with the arbitrariness of the
normalising constant of such a prior distribution. Moreover, when
applied prequentially, scoring rule based model selection will
typically lead to consistent selection of the true model: we have
argued for this property both in general terms and in the context of
normal linear models with known or unknown variance,\todo{New/revised
bit starts} with their usual improper priors.

While the literature on ``objective'' Bayesian model selection
contains some valuable discussion of general principles---see, for
example, \citet{bayarri:2012}---most of it focuses on explorations and
recommendations of appropriate priors, or classes of priors, or
relationships between priors, for use in specified circumstances or
for specified purposes. When those priors are improper, as is
commonly the case, further manipulations and distortions of the Bayes
factor are required to produce a well-defined procedure. Our approach
here makes no specific recommendations, leaving users free to apply
their most favoured prior distributions. Instead, we have introduced
a very general procedure, based on homogeneous proper scoring rules,
that allows the use of improper priors, however selected, without
needing to worry about the arbitrariness of their scaling constants.

There remains the issue of the choice of homogeneous proper scoring
rule. There are no clear theoretical grounds for preferring one over
another. Purely for simplicity, we have confined attention to the
most basic homogeneous rule, the Hyv\"arinen score, but similar
results can be expected for other homogeneous scoring rules. Further
theoretical and computational exploration and comparison of the
properties of the various methods is clearly required. Such
exploration might be extended to their performance in other contexts:
for example, issues of consistent model selection when the number of
parameters increases with the sample size
\citep{moreno:2010,johnson:2012}.

%%% Local Variables:
%%% mode: latex
%%% TeX-master: "BMSR1"
%%% End:

%

\begin{acknowledgement}

We thank the Editor, Associate Editor and referees for their helpful
feedback on a previous version of this article.

\end{acknowledgement}

\end{document}